\renewcommand{\le}{\varleq}
\renewcommand{\ge}{\vargeq}
\newcommand\mL{L\kern-0.08cm\char39}
\newcommand{\myand}{\text{ and }}
\newcommand{\seb}{\{\,}
\newcommand{\sen}{\,\}}
\newcommand{\getsby}[1]{\xleftarrow{#1}}
\newcommand{\tesgh}{edge-surjective graph homomorphism}
\newcommand{\pdirectional}{\raise0.05em\hbox{$+$}directional}
\newcommand{\pdirectionality}{\raise0.05em\hbox{$+$}directionality}
\newcommand{\pdirectionalitys}{\raise0.05em\hbox{$+$}directionality }
\newcommand{\pdirectionals}{\raise0.05em\hbox{$+$}directional }
\newcommand{\bidirectional}{bidirectional}
\newcommand{\bidirectionals}{bidirectional }
\newcommand{\bidirectionality}{bidirectionality}
\newcommand{\bidirectionalitys}{bidirectionality }
\newcommand{\Z}{\mathbb{Z}}
\newcommand{\Nonne}{\mathbb{N}}
\newcommand{\Posint}{\mathbb{N} \setminus \{ 0 \}}
\newcommand{\bi}{\in \Z}
\newcommand{\beposint}{\in \Posint}
\newcommand{\bpi}{\ge 1} 
\newcommand{\benonne}{\in \Nonne}
\newcommand{\bni}{\ge 0} 
\newcommand{\diam}{{\rm diam}}
\newcommand{\Fcal}{\mathcal{F}}
\newcommand{\Gcal}{\mathcal{G}}
\newcommand{\Ucal}{\mathcal{U}}
\newcommand{\kuu}{\emptyset}
\newcommand{\nekuu}{\neq \kuu}
\newcommand{\fai}{\varphi}
\newcommand{\hx}{\hat{x}}
\newcommand{\hy}{\hat{y}}
\newcommand{\hY}{\hat{Y}}
\newcommand{\centb}{\begin{center}}
\newcommand{\centn}{\end{center}}
\newcommand{\enumb}{\begin{enumerate}}
\newcommand{\enumn}{\end{enumerate}}
\newcommand{\itemb}{\begin{itemize}}
\newcommand{\itemn}{\end{itemize}}
\newtheorem{thm}{Theorem}[section]
\newtheorem{lem}[thm]{Lemma}
\newtheorem{mainthm}{Theorem}
\theoremstyle{definition}
\newtheorem{defn}[thm]{Definition}
\theoremstyle{remark}
\newtheorem{nota}[thm]{Notation}
\newtheorem{rem}[thm]{Remark}
\numberwithin{equation}{section}
\newcommand{\covrep}[2]{#1_0 \getsby{#2_1} #1_1 \getsby{#2_2} #1_2 \getsby{#2_3} \dotsb}
\begin{document}

\title[Combinatorial embedding of chain transitive]
{Combinatorial embedding of chain transitive zero-dimensional systems into chaos}

\author{TAKASHI SHIMOMURA}

\address{Nagoya University of Economics, Uchikubo 61-1, Inuyama 484-8504, Japan}
\curraddr{}
\email{tkshimo@nagoya-ku.ac.jp}
\thanks{}

\subjclass[2010]{Primary 37B05, 54H20.}

\keywords{graph, covering, zero-dimensional, Li--Yorke, uniformly chaotic}

\date{\today}

\dedicatory{}

\commby{}

\begin{abstract}
We show that a zero-dimensional chain transitive dynamical system can be embedded into
a densely uniformly chaotic system, with dense uniformly chaotic set $K$.
We concretely construct a Mycielski set $K$ that is also invariant.
Furthermore, every point in $K$ is positively and negatively transitive.
The uniform proximality and recurrence of $K$ are also bidirectional.
\end{abstract}

\maketitle
\section{Introduction}
A pair $(X,f)$ of a compact metric space $X$ and a continuous surjection $f : X \to X$
 is called a {\it topological dynamical system}.
A topological dynamical system $(X,f)$ is called a zero-dimensional system,
 if $X$ is totally disconnected.
In \cite{Shimomura4}, we presented a way to express every zero-dimensional system
 combinatorially by a sequence of graph coverings (see \S \ref{sec:covering}).
Let $(Y,g)$ be an arbitrary chain transitive zero-dimensional system.
In this paper, we construct a chaotic zero-dimensional system
 $(X,f)$ that contains $(Y,g)$ and has a dense scrambled set.
A topological dynamical system $(X,f)$ is said to be {\it chain transitive} if
 for any $\epsilon > 0$ and any pair $(x,y) \in X \times X$, there exists
 a finite sequence $(x_0 = x, x_1,x_2,\dotsc,x_l)$ such that 
 $d(f(x_i),x_{i+1}) < \epsilon$ for all $0 \le i < l$.
A pair $(x,y) \in X^2\setminus \Delta_X$ is said to be proximal, if 
\[ \liminf_{n \to +\infty}d(f^n(x),f^n(y)) = 0.\]
A pair $(x,y)$ is said to be a Li--Yorke pair if $(x,y)$ is proximal and
 $(x,y)$ satisfies
\[ \limsup_{n \to +\infty}d(f^n(x),f^n(y)) > 0.\]
A subset $K \subset X$ is said to be {\it scrambled}\/ if any pair
 $(x,y) \in K^2\setminus \Delta_K$ is a Li--Yorke pair.
If there exists an uncountable scrambled set, then the system is called
 Li--Yorke chaotic system.
Note that if a proximal pair $(x,y)$ with $x \ne y$ is recurrent
 in the sytem $(X\times X,f\times f)$,
 then it is a Li--Yorke pair.
Akin {\it et al.}\/ \cite{AGHSY} presented a stronger chaos notion 
 called uniform chaos, and also
 presented a criterion for chaos (Theorem 3.1 of \cite{AGHSY}).
Actually, they posed the notion of {\it uniformly chaotic set}
 (see Definition \ref{defn:uniformly-chaotic-set}).
The system that has such set is said to be uniformly chaotic, and
 the system that has a dense uniformly chaotic set is said to be
 {\it densely uniformly chaotic}.
On the other hand, Yuan and L\"{u} in \cite{YL}, and Tan in \cite{Tan},
 without the assumption of compactness,
 investigated into the invariance of scrambled sets.
We show the following:
\begin{mainthm}\label{thm:main}
Let $(Y,g)$ be a chain transitive zero-dimensional system.
Then, there exists densely uniformly chaotic
 zero-dimensional system  $(X,f)$ with a dense uniformly chaotic invariant set
 $K \subset X\setminus Y$ such that
\enumb
\item\label{main:homeomorphism} the restriction $f_{X \setminus Y}$ is a homeomorphism,
\item\label{main:K} $K = \bigcup_{N \ge 1} C_N$,
 where $C_1 \subset C_2 \subset \dotsb$ is an 
 increasing sequence of Cantor sets,
\item\label{main:proximal}
   each $C_N$ is both positively and also negatively uniformly proximal,
\item\label{main:recurrent}
 each $C_N$ is both positively and also negatively uniformly recurrent,
\item\label{main:invariant} $f(K) = K$,
\item\label{main:dense} $K$ is dense in $X$,
\item\label{main:transitive}
 each $x \in K$ is positively and also negatively transitive.
\enumn
\end{mainthm}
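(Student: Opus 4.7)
The plan is to exploit the graph-covering representation of zero-dimensional systems established in \cite{Shimomura4}. Since $(Y,g)$ is chain transitive, I may represent it as the inverse limit of a sequence $\covrep{G}{\varphi}$ in which each $G_n$ is a strongly connected directed graph. I would construct, in parallel, an enlarged sequence $\covrep{\hat G}{\hat\varphi}$ together with embeddings $G_n \hookrightarrow \hat G_n$ intertwining $\hat\varphi_{n+1}$ with $\varphi_{n+1}$. Taking inverse limits, the embedded copies of $G_n$ yield the subsystem $(Y,g)$, while the additional combinatorial data produces the orbits that will form $K \subset X \setminus Y$.

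The inductive step carries most of the content. At stage $n$ I adjoin to $G_n$ a finite family $\sT_n$ of pairwise disjoint path-shaped ``towers'' of new vertices, equipped with carefully chosen incoming and outgoing edges, so that: (i) the cover $\hat\varphi_{n+1}$ maps new vertices of $\hat G_{n+1}$ onto new vertices of $\hat G_n$ consistently, with each old vertex being preimaged only by old vertices, guaranteeing $K \cap Y = \emptyset$ and that $f$ restricts to a homeomorphism on $X \setminus Y$; (ii) every vertex of $G_n$ is visited by some tower of $\sT_n$, so that $K$ is dense in $X$ and each orbit in $K$ is bi-transitive; (iii) at prescribed times, all towers of $\sT_n$ pass simultaneously through a common ``pinch'' vertex $v_n \in G_n$, in both positive and negative direction, and the clopen neighborhood encoded by $v_n$ at level $n$ shrinks as $n \to \infty$, giving bidirectional uniform proximality at each scale; (iv) at infinitely many positive and negative times, all towers return to a designated ``rendezvous'' vertex $w_n$, supplying bidirectional uniform recurrence at scale $n$. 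The strong connectivity of $G_n$ inherited from chain transitivity of $(Y,g)$ is what enables routing the added edges to fulfil (ii)--(iv).

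The Mycielski set $K$ then arises by a standard doubling: at stage $N$ I select $2^N$ orbits from $X \setminus Y$, one from each of a growing collection of towers, arranged so that the selected orbits jointly witness items (iii) and (iv) at all levels $\le N$. Their closure $C_N$ in $X$ is a Cantor set contained in $X \setminus Y$, the $C_N$ can be chosen nested, and setting $K := \bigcup_{N \ge 1} C_N$ gives an invariant, dense, uniformly chaotic set whose properties (c)--(g) match the listed ones, with (a) and (e) coming from the two-sided edge structure of the towers.

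The main difficulty is imposing (ii), (iii) and (iv) \emph{bidirectionally} while keeping everything compatible with the maps $\hat\varphi_{n+1}$. It is routine to arrange positive transitivity, positive uniform proximality and positive uniform recurrence for a family of towers by concatenating predetermined words and exploiting strong connectivity. Forcing the \emph{same} family to have the analogous properties in negative time, while simultaneously making the predecessors of every new vertex at level $n+1$ mirror the forward-time structure and lift coherently through the tower of covers, is where the delicate two-sided combinatorial bookkeeping lies. This symmetric control of past and future along the covering sequence is, in my view, the crux of the proof.
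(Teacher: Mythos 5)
Your overall strategy---represent $(Y,g)$ as an inverse limit of irreducible graphs, enlarge each graph by new path-shaped vertices mapped coherently by the covers, use irreducibility to route the new paths through every vertex, and use a shrinking ``pinch'' location for uniform proximality---is indeed the skeleton of the paper's construction (the paper adds, at each level, a single new fixed-point vertex $v_{n,0}$ with a self-loop $e_n$ and two paths $p_{1,n}$, $p_{2,n}$ joining it to a spanning walk $w_n$ of $F_n$; the long runs of $e_n$ play the role of your pinch vertex, cohering to a genuine fixed point $p$). However, there is a genuine gap in your construction of the Cantor sets $C_N$. You propose to select $2^N$ orbits in $X\setminus Y$ and let $C_N$ be their closure. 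This cannot work: for item (g) and for density of $K$ these orbits must be dense in $X$, so the closure of even one of them is all of $X$, which meets $Y$ and is certainly not uniformly proximal; and if you instead close up only the $2^N$ chosen points you get a finite set, not a Cantor set. The difficulty is precisely that every point of $K$ must have a dense orbit while $K$ itself must avoid $Y$ and be uniformly proximal, so no orbit-closure construction can produce the $C_N$.

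The paper's mechanism is different and is the step you are missing: the covering map is arranged so that $\fai_{n+1}(p_{1,n+1})$ wraps around the block $p_{1,n}\,w_n\,p_{2,n}$ several times, so a distinguished middle segment $q_{m(k+1)}$ of $p_{1,m(k+1)}$ covers the previous segment $q_{m(k)}$ \emph{three} times. One then sets $C_N:=\bigcap_{k\ge N}U(q_{m(k)})$, an intersection of clopen sets; the threefold branching at every stage is what makes $C_N$ a Cantor set, and the condition that the level-$m(k)$ coordinate lie in $V(q_{m(k)})\subset V(G_{m(k)})\setminus V(F_{m(k)})$ keeps $C_N$ inside $X\setminus Y$ even though each of its points has a dense two-sided orbit. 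Uniform proximality and recurrence then come for free because all translates of $q_{m}$ project either into long runs of $e_n$ (hence uniformly close to the fixed point $p$) or isomorphically onto $p_{1,n}$ (hence uniformly recurrent), in both time directions. Your instinct that the bidirectional bookkeeping is the crux is somewhat misplaced: in the paper that part is handled by the evident symmetry of $p_{1,n}$ and $p_{2,n}$, and the real work is in this multiple-covering device that simultaneously generates the Cantor structure and certifies proximality and recurrence at prescribed times.
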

By \ref{main:K}, \ref{main:proximal}, \ref{main:recurrent}, \ref{main:invariant},
 and \ref{main:dense}, $K$ is the dense uniformly chaotic invariant set.
%
%
%
%
%
\section{Preliminaries.}
Let  $\Z$ be the set of all integers,
 and $\Nonne$ be the set of all non-negative integers.
For integers $a < b$, the intervals are denoted by $[a,b] := \seb a, a+1, \dotsc,b \sen$.
%
%
\subsection{Uniformly chaotic set.}
In this subsection, we introduce the notion presented by Akin {\it et al.}\/
 in \cite{AGHSY}.
Let $(X,f)$ be a topological dynamical system.
A subset $A \subset X$ is {\it uniformly recurrent}\/
 if for every $\epsilon > 0$ there exists an arbitrarily large $n > 0$ such that
 $d(f^n(x),x) < \epsilon$ for all $x \in A$.
A subset $A \subset X$ is {\it uniformly proximal}\/
 if $\liminf_{n \to +\infty}\diam(A) = 0$.
\begin{defn}[Akin {\it et al.} \cite{AGHSY}]\label{defn:uniformly-chaotic-set}
Let $(X,f)$ be a topological dynamical system.
A subset $K \subset X$ is called a {\it uniformly chaotic set}\/ if
 there exist Cantor sets $C_1 \subset  C_2 \subset \dotsb$ such that:
\enumb
\item $K = \bigcup_{i = 1}^{\infty}C_i$,
\item for each $N \bni$, $C_N$ is uniformly recurrent, and
\item for each $N \bni$, $C_N$ is uniformly proximal.
\enumn
Here, $(X,f)$ is called (densely) uniformly chaotic if
 $(X,f)$ has a (dense) uniformly chaotic subset.
\end{defn}
\begin{rem}
Our definition of the uniformly chaotic set seems to be different a little
 formally.
Nevertheless, there exists no difference, because of the next two facts:
\itemb
\item if $K$ is a uniformly chaotic set, then
 every finite subset of $K$ is uniformly recurrent,
\item if $K$ is a uniformly chaotic set, then
 every finite subset of $K$ is uniformly proximal.
\itemn
For the original definition, see \S 2.1 of \cite{AGHSY}.
\end{rem}
Every uniformly chaotic set is a scrambled set in the sense of Li--Yorke.
For further discussions, see \cite{AGHSY}.
\subsection{Graph covering.} \label{sec:covering}
%
%
We have given the notion of graph coverings for
 all zero-dimensional continuous surjections (see \cite[\S 3]{Shimomura4}).
In this section,
 we repeat the construction of general graph coverings
 for general zero-dimensional systems.
A pair $G = (V,E)$
 consisting of a finite set $V$ and a relation $E \subseteq V \times V$ on $V$
 can be considered as a directed graph with vertices $V$
 and an edge from $u$ to $v$ when $(u,v) \in E$.
For a finite directed graph $G = (V,E)$, we write as $V = V(G)$ and $E = E(G)$.
%
%
\begin{nota}
In this paper, we assume that a finite directed graph $G = (V,E)$
 is a surjective relation,
 i.e., for every vertex $v \in V$ there exist edges $(u_1,v),(v,u_2) \in E$.
\end{nota}
For finite directed graphs $G_i = (V_i,E_i)$ with $i = 1,2$,
 a map $\fai : V_1 \to V_2$ is said to be a {\it graph homomorphism}\/
 if for every edge $(u,v) \in E_1$, it follows that $(\fai(u),\fai(v)) \in E_2$.
In this case, we write as $\fai : G_1 \to G_2$.
For a graph homomorphism $\fai : G_1 \to G_2$,
 we say that $\fai$ is {\it edge-surjective}\/
 if $\fai(E_1) = E_2$.
%
%
%
%
%
Suppose that a graph homomorphism $\fai : G_1 \to G_2$ satisfies the following condition:
\[(u,v),(u,v') \in E_1 \text{ implies that } \fai(v) = \fai(v').\]
In this case, $\fai$ is said to be {\it \pdirectional}.
Suppose that a graph homomorphism $\fai$ satisfies both of the following conditions:
\[(u,v),(u,v') \in E_1 \text{ implies that } \fai(v) = \fai(v') \myand \]
\[(u,v),(u',v) \in E_1 \text{ implies that } \fai(u) = \fai(u').\]
Then, $\fai$ is said to be {\it \bidirectional}.
%
%
\begin{defn}\label{defn:cover}
For a finite directed graphs $G_1$ and $G_2$,
 a graph homomorphism $\fai : G_1 \to G_2$
 is called a {\it cover}\/ if it is a \pdirectionals \tesgh.
\end{defn}
For a sequence $G_1 \getsby{\fai_2} G_2 \getsby{\fai_3} \dotsb$
 of graph homomorphisms and $m > n$,
 we write $\fai_{m,n} := \fai_{n+1} \circ \fai_{n+2} \circ \dotsb \circ \fai_m$.
Then, $\fai_{m,n}$ is a graph homomorphism.
If all ${\fai_i}$ $(i \beposint)$ are edge surjective,
 then every $\fai_{m,n}$ is edge surjective.
If all ${\fai_i}$ $(i \beposint)$ are covers, every $\fai_{m,n}$ is a cover.
%
%
Let $G_0 := \left(\seb v_0 \sen, \seb (v_0,v_0) \sen \right)$ be a singleton graph.
For a sequence of graph covers $G_1 \getsby{\fai_2} G_2 \getsby{\fai_3} \dotsb$, we
 attach the singleton graph $G_0$ at the head.
We call a sequence of graph covers
 $\covrep{G}{\fai}$
 as a {\it graph covering}\/ or just a {\it covering}.
From this paper, considering the numbering of Bratteli diagrams,
 we use this numbering of graph covering.
In the original paper, we have used the numbering as $G_n \getsby{\fai_n} G_{n+1}$.
%
%
Let us write the directed graphs as $G_i = (V_i,E_i)$ for $i \benonne$.
We define the {\it inverse limit}\/ of $\Gcal$ as follows:
\[V_{\Gcal} := \seb (v_0,v_1,v_2,\dotsc)
 \in \prod_{i = 0}^{\infty}V_i~|~v_i = \fai_{i+1}(v_{i+1})
 \text{ for all } i \benonne \sen \text{ and}\]
\[E_{\Gcal} := 
\seb (x,y) \in V_{\Gcal} \times V_{\Gcal}~|~
(u_i,v_i) \in E_i \text{ for all } i \benonne\sen,\]
where $x = (u_0,u_1,u_2,\dotsc), y = (v_0,v_1,v_2,\dotsc) \in V_{\Gcal}$.
The set $\prod_{i = 0}^{\infty}V_i$ is equipped with the product topology.
\begin{nota}\label{nota:open-sets-of-vertices}
Let $X = V_{\Gcal}$,
 and let us define a map $f : X \to X$ by $f(x) = y$ iff $(x,y) \in E(\Gcal)$.
For each $n \benonne$, the projection from $X$ to $V_n$ is denoted by $\fai_{\infty,n}$. 
For $v \in V_n$, we denote a clopen set $U(v) := \fai_{\infty,n}^{-1}(v)$.
For a subset $K \subset V_n$, we denote a clopen set $U(K) := \bigcup_{v \in K}U(v)$.
Let $\seb n_k \sen_{k \bpi}$ be a strictly increasing sequence.
Suppose that there exist a sequence
 $\seb u_{n_k} \mid u_{n_k} \in V_{n_k}, k \bpi\sen$ such that
 $\fai_{n_{k+1},n_k}(u_{n_{k+1}})= u_{n_k}$ for all $k \bpi$.
Then, there exists a unique element $x \in V_{\Gcal}$ such that $x \in U(u_{n_k})$
 for all $k \bpi$.
This element is denoted as $x = \lim_{k \to \infty}u_{n_k}$.
\end{nota}
%
%
%
The next follows:
\begin{thm}[Theorem 3.9 and Lemma 3.5 of \cite{Shimomura4}]\label{thm:0dim=covering}
Let $\Gcal$ be a covering
 $\covrep{G}{\fai}$.
Let $X = V_{\Gcal}$ and let us define $f : X \to X$ as above.
Then, $f$ is a continuous surjective mapping and $(X,f)$ is
 a zero-dimensional system.
Conversely, every zero-dimensional system can be written in this way.
Furthermore, if all $\fai_n$ are \bidirectional, then this zero-dimensional system is a 
 homeomorphism and every compact zero-dimensional homeomorphism is written in this way.
\end{thm}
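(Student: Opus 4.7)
The plan is to establish both implications in turn, then handle the homeomorphism refinement. For the forward direction, I would first observe that $V_{\Gcal}$ is a closed subset of the compact totally disconnected product $\prod_{i \benonne} V_i$, making $X$ compact zero-dimensional. To see that $f$ is well-defined, take $x = (u_0, u_1, u_2, \dotsc) \in X$: the standing surjective-relation assumption gives at least one out-edge at each vertex $u_i$, and the \pdirectionality of $\fai_{i+1}$ forces any two out-edges from $u_{i+1}$ to project to the same vertex of $V_i$, so the successor sequence $(v_0, v_1, \dotsc)$ is uniquely determined and automatically satisfies $\fai_{i+1}(v_{i+1}) = v_i$. Continuity of $f$ follows because each output coordinate depends only on finitely many input coordinates. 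For surjectivity I would use compactness: given $y = (v_0, v_1, \dotsc) \in V_{\Gcal}$, pick at each level $n$ a vertex $u_n$ with $(u_n, v_n) \in E_n$ (possible by surjectivity of the relation $E_n$), and extend it upward via vertex-surjectivity of each $\fai_m$ (a consequence of edge-surjectivity plus the surjective-relation assumption); the resulting nested nonempty closed sets of ``partial preimages'' have nonempty intersection, yielding $x$ with $f(x) = y$.

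For the converse, given a zero-dimensional system $(X,f)$, I would build a sequence of clopen partitions $\sP_n$ of $X$ with $\mesh(\sP_n) \to 0$, chosen so that $\sP_{n+1}$ refines both $\sP_n$ and $f^{-1}(\sP_n)$; this guarantees that for every $U \in \sP_{n+1}$ the image $f(U)$ lies in a single $\sP_n$-cell. Setting $V_n = \sP_n$, $E_n = \seb (U,W) \in V_n \times V_n \mid f(U) \cap W \noemp \sen$, and $\fai_{n+1} : V_{n+1} \to V_n$ equal to the natural refinement map, the image containment is exactly the \pdirectionality of $\fai_{n+1}$, while edge-surjectivity is witnessed by picking any $x \in U$ with $f(x) \in W$ and passing to the $\sP_{n+1}$-cells of $x$ and of $f(x)$. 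I would then check that the map $\Psi : X \to V_{\Gcal}$ sending $x$ to the sequence of $\sP_n$-cells containing it is a conjugacy: the condition $\mesh(\sP_n) \to 0$ gives injectivity and openness, surjectivity is a compactness argument, and the intertwining with $f$ is immediate from the definition of $E_n$.

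For the bidirectional statement, bidirectionality of each $\fai_{n+1}$ forces $f$ to be injective: if $f(x) = f(y) = z$, then $(x_{n+1}, z_{n+1})$ and $(y_{n+1}, z_{n+1})$ both lie in $E_{n+1}$, so the backward half of bidirectionality gives $x_n = \fai_{n+1}(x_{n+1}) = \fai_{n+1}(y_{n+1}) = y_n$ for every $n$, hence $x = y$; compactness then upgrades the continuous bijection $f$ to a homeomorphism. Conversely, for a zero-dimensional homeomorphism I would additionally require that $\sP_{n+1}$ refine $f(\sP_n)$, which is itself a clopen partition because $f^{-1}$ is continuous; this ensures that $f^{-1}(W)$ lies in a single $\sP_n$-cell for each $W \in \sP_{n+1}$, which is exactly the second bidirectional condition. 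The main technical obstacle throughout is this simultaneous partition refinement: at each step one must shrink cells to control diameter while aligning them with $f$-images (and, in the homeomorphism case, with $f$-preimages) and maintaining inductive compatibility with the previous partition. Once this refinement scheme is set up correctly, the remainder is a routine verification of commuting diagrams.
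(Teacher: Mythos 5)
Your argument is correct and is essentially the standard one: the paper itself gives no proof of this theorem, deferring entirely to Theorem 3.9 and Lemma 3.5 of \cite{Shimomura4}, and the Remark immediately following the statement records exactly the dictionary between refining clopen partitions and graph coverings that your converse direction constructs. The only places worth tightening are routine: in the forward direction you should note that the candidate successor sequence $v_i = \fai_{i+1}(N^+(u_{i+1}))$ is itself coherent and edge-related to $x$ (existence, not just uniqueness), and in the surjectivity step the partial preimages at level $n$ should be obtained by projecting a single edge $(u_n,v_n)\in E_n$ downward via the graph homomorphisms $\fai_{n,i}$ rather than by independent choices at each level --- both are one-line checks and do not affect the validity of the approach.
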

\begin{rem}
Let $\covrep{G}{\fai}$ be a graph
 covering.
Let $(X,f)$ be the inverse limit.
For each $n \bni$, the set $\Ucal_n := \seb U(v) \mid v \in V(G_n) \sen$
 is a clopen partition
 such that $U(v) \cap f(U(u)) \nekuu$ if and only if $(u,v) \in E(G_n)$.
Furthermore, $\bigcup_{n \bni} \Ucal_n$ generates the topology of $X$.
Conversely, suppose that $\Ucal_n$ $(n \bni)$ be a sequence of finite clopen partitions
 of a compact metrizable zero-dimensional space $X$, $\bigcup_{n \bni}\Ucal_n$ 
 generates the topology of X, and $f : X \to X$ be a
 continuous surjective map such that for any $U \in \Ucal_{n+1}$ there exists 
 $U' \in \Ucal_n$ such that $f(U) \subset U'$.
Then, we can define a graph covering in the usual manner.
\end{rem}
%
%
We sometimes write the inverse limit $(X,f)$ as $G_{\infty}$.
\begin{nota}
Let $G = (V,E)$ be a surjective directed graph.
A sequence of vertices $(v_0,v_1,\dotsc,v_l)$ of $G$ is said to be a {\it walk}\/ of
 {\it length}\/ $l$ if $(v_i, v_{i+1}) \in E$ for all $0 \le i < l$.
We denote $l(w) := l$.
In this case, for $0 \le a \le b \le l$, we denote a restricted walk as
 $w[a,b] := (v_a,v_{a+1},\dotsc,v_b)$.
We say that a walk $w = (v_0,v_1,\dotsc,v_l)$ is a {\it path}\/
 if the $v_i$ $(0 \le i \le l)$ are mutually distinct.
For a walk $w = (v_0,v_1,\dotsc,v_l)$,
 we define $V(w) := \seb v_i \mid 0 \le i \le l \sen$
 and $E(w) := \seb (v_i,v_{i+1}) \mid 0 \le i < l \sen$.
For a subgraph $G'$ of $G$, we also define $V(G')$ and $E(G')$ in the same manner.
For a walk $w$ and a subgraph $G'$, we also denote a clopen set
 $U(w) := \bigcup_{v \in V(w)}U(v)$
 and $U(G') := \bigcup_{v \in V(G')}U(v)$.
\end{nota}
%
%
\begin{nota}
Let $w_1 = (u_0,u_1,\dotsc, u_l)$ and $w_2 = (v_0,v_1,\dotsc, v_{l'})$ be walks
 such that $u_l = v_0$.
Then, we denote $w_1  w_2 := (u_0,u_1,\dots,u_l,v_1,v_2,\dotsc,v_{l'})$.
Evidently, we get $l(w_1  w_2) = l+l'$.
\end{nota}
%
%
\begin{defn}
A finite directed graph $G$ is {\it irreducible}\/ if for any pair of vertices
 $(u,v)$ of $G$, there exists a walk from $u$ to $v$.
\end{defn}
In this case, for any vertices $u,v \in V(G)$, there exists a walk $w$ from $u$ to $v$
 such that $E(w) = E(G)$.
\begin{rem}\label{rem:chain-transitive-irreducible}
Let $\covrep{G}{\fai}$ be a graph covering with the inverse limit $(X,f)$.
It is easy to check that $(X,f)$ is chain transitive if and only if for all $n \bni$
 $G_n$ is irreducible.
\end{rem}
\section{Construction and properties.}
In this section, we prove Theorem \ref{thm:main}, constructing $(X,f)$ from $(Y,g)$.
We recall that $(Y,g)$ is an arbitrary chain transitive zero-dimensional system.
We have to construct a zero-dimensional system $(X,f)$
 such that $Y \subset X$, and there exists a dense uniformly chaotic
 subset $K \subset X \setminus Y$.
The rest of the properties in Theorem \ref{thm:main} are also shown in this section.
\subsection{Construction of the system.}
Let $\Fcal : \covrep{F}{\phi}$ be a covering that expresses zero-dimensional system
 $(Y,g)$.
We assumed that $(Y,g)$ is chain transitive.
From Remark \ref{rem:chain-transitive-irreducible}, for all $n \bni$, all $F_n$ are
irreducible.
We shall construct a new covering $\Gcal : \covrep{G}{\fai}$ such that
 for each $n \bni$, $F_n$ is a subgraph of $G_n$ and $\fai_n|_{F_n} = \phi_n$.
For $n = 0$, $F_0$ is a singleton graph $(\seb v_0 \sen, \seb (v_0,v_0) \sen)$.
We let $G_0 = F_0$.
For each $n > 0$, we take and fix vertices $v_{n,1},v_{n,2} \in V(F_n)$ such that
$\phi_{n+1}(v_{n+1,i}) = v_{n,i}$ for all $i = 1,2$ and $n \bni$.
Let $Y_i$ $(i \bi)$ be copies of $Y$.
Let $\hY
 := \seb (y_i) \in \prod_{i \bi}Y_i \mid f(y_i) = y_{i+1} \text{ for all } i \bi \sen$.
Let  $x := (v_{0,0},v_{1,1},v_{2,1},v_{3,1},\dotsc),
 y := (v_{0,0},v_{1,2},v_{2,2},v_{3,2},\dotsc) \in Y$.
Take $\hx=(x_i), \hy=(y_i) \in \hY$ such that $x_0 = x$ and $y_0 = y$.
Let $w_{n,1} = (v_{n,1,0} = v_{n,1},v_{n,1,1},v_{n,1,2},\dotsc,v_{n,1,n})$
 be a walk 
 such that $x_i \in U(v_{n,1,i})$ for all $0 \le i \le n$.
We also define a vertex $v_{n,1,n+1} \in V(F_n)$ with $x_{n+1} \in U(v_{n,1,n+1})$.
Let $w_{n,2} = (v_{n,2,-n},v_{n,2,-(n-1)}, \dotsc,v_{n,2,-1},v_{n,2,0} = v_{n,2})$
 be a walk 
 such that $x_i \in U(v_{n,1,i})$ for all $-n \le i \le 0$.
We also define a vertex $v_{n,2,-(n+1)} \in V(F_n)$
 with $y_{-(n+1)} \in U(v_{n,1,-(n+1)})$.
For each $n \bni$, take a walk $w_n$ in $F_n$ from $v_{n,1,n}$ to $v_{n,2,-n}$
 such that $E(w) = E(F_n)$.
We shall attach a vertices $v_{n,0}$ for each $n > 0$ different from vertices of $F_n$,
 and paths $p_{1,n}$, $p_{2,n}$ that
 connect from $v_{n,0}$ to $v_{n,1,n}$ and from $v_{n,2,-n}$ to $v_{n,0}$ respectively
 (see Figure \ref{fig:construction1}).
The length of $p_{1,n},p_{2,n}$ and the covering map $\fai_n$ shall be defined later.
We also assume the edge $e_n = (v_{n,0},v_{n,0})$ for each $n > 0$.
Therefore, it follows that $V(G_n) = V(p_{1,n}) \cup V(p_{2,n}) \cup V(F_n)$ and
 $E(G_n) = E(p_{1,n}) \cup E(p_{2,n}) \cup \seb e_n \sen \cup E(F_n)$.

\begin{figure}\label{fig:construction1}
\begin{center}
\includegraphics[width = 0.8\textwidth, height=14cm]{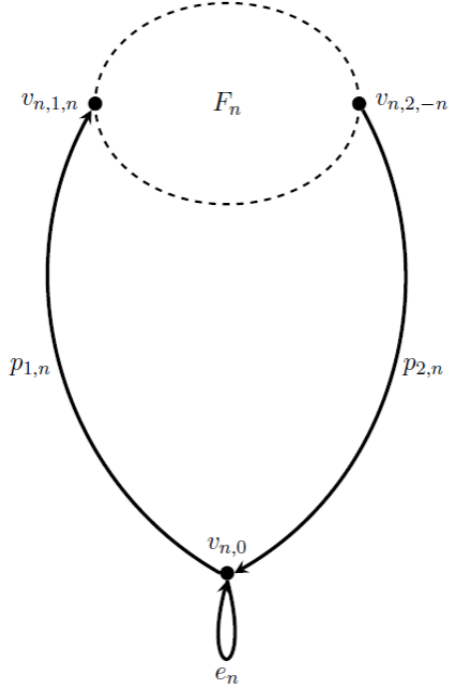}
\end{center}
\vspace{-38mm}

\caption{The way of constructing $G_n$.}
\end{figure}

We write as:
\itemb
\item $p_{1,n}
 = (v_1(n,0) = v_{n,0}, v_1(n,1), v_1(n,2), \dotsc, v_1(n,l_{1,n}) = v_{n,1,n})$,
\item $p_{2,n}
 = (v_2(n,0) = v_{n,2,-n}, v_2(n,1), v_2(n,2), \dotsc, v_2(n,l_{2,n}) = v_{n,0})$.
\itemn
We also write $p_{i,n}(j) = v_i(n,j)$ for $i = 1,2$ and $0 \le j \le l_{i,n}$.
The lengths $l_{1,n},l_{2,n}$ will be defined in the following way.
For $n = 1$, we define $l_{1,1} \ge 1$ and $l_{2,1} \ge 1$ arbitrarily.
The covering map $\fai_1$ is obviously defined.
If $G_n$ is defined, then $\fai_{n+1}$ is defined in the next way:
\itemb
\item $\fai_{n+1}(v_{n+1,0}) = v_{n,0}$,
\item $\fai_{n+1}(v) = \phi_{n+1}(v)$ for all $v \in V(F_{n+1})$.
\item $\fai_{n+1}(p_{1,n+1})
 = e_n\ (p_{1,n}\ w_n\ p_{2,n})^2
 \ p_{1,n}\ (v_{n,1,n},v_{n,1,n+1})$,
\item $\fai_{n+1}(p_{2,n+1})
 = (v_{n,2,-(n+1)},v_{n,2,-n})
 \ p_{2,n}\ (p_{1,n}\ w_n\ p_{2,n})^2\ e_n$.
\itemn
The length $l_{1,n+1},l_{2,n+1}$
 is defined such that the above definitions hold.
\begin{lem}\label{lem:well-defined}
In the above definition, all $\fai_n$'s are \pdirectionals.
If $\Fcal$ is \bidirectional, then all $\fai_n$'s are \bidirectional.
\end{lem}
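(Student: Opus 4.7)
The plan is to verify the \pdirectionality of each $\fai_{n+1}$ (and, when $\Fcal$ is \bidirectional, the \bidirectionality) by a vertex-by-vertex check in $G_{n+1}$. The base case is immediate: $G_0$ is a singleton, so $\fai_1$ is trivially both properties. For $n \ge 1$, the only vertices of $G_{n+1}$ where out-edges can come from different sources are the junctions $v_{n+1,0}$ (the self-loop $e_{n+1}$ together with the start of $p_{1,n+1}$) and $v_{n+1,2,-(n+1)}$ (the start of $p_{2,n+1}$ together with the $F_{n+1}$-out-edges); similarly, the junctions for in-edges are $v_{n+1,0}$ (self-loop together with the end of $p_{2,n+1}$) and $v_{n+1,1,n+1}$ (end of $p_{1,n+1}$ together with the $F_{n+1}$-in-edges). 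Every other vertex has either a unique out-edge (resp. in-edge), or else all its out-edges (resp. in-edges) lie in $F_{n+1}$, in which case \pdirectionality (resp. \bidirectionality) of $\fai_{n+1}$ at that vertex follows at once from that of $\phi_{n+1}$.

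For \pdirectionality, I would inspect the two out-edge junctions. At $v_{n+1,0}$, since $\fai_{n+1}(v_{n+1,0}) = v_{n,0}$ and the prescribed image walk $\fai_{n+1}(p_{1,n+1}) = e_n\,(p_{1,n}\,w_n\,p_{2,n})^2\,p_{1,n}\,(v_{n,1,n}, v_{n,1,n+1})$ opens with the self-loop $e_n = (v_{n,0}, v_{n,0})$, the second vertex of the image walk is $v_{n,0}$, matching the target of $e_{n+1}$. At $v_{n+1,2,-(n+1)}$, the opening edge of $\fai_{n+1}(p_{2,n+1}) = (v_{n,2,-(n+1)}, v_{n,2,-n})\,p_{2,n}\,(p_{1,n}\,w_n\,p_{2,n})^2\,e_n$ sends the path-out-neighbor to $v_{n,2,-n}$; on the other hand, $g(y_{-(n+1)}) = y_{-n}$ gives an edge $(v_{n+1,2,-(n+1)}, v_{n+1,2,-n}) \in E(F_{n+1})$ with $\phi_{n+1}(v_{n+1,2,-n}) = v_{n,2,-n}$, so the \pdirectionality of $\phi_{n+1}$ forces every $F_{n+1}$-out-neighbor of $v_{n+1,2,-(n+1)}$ to have $\phi_{n+1}$-image $v_{n,2,-n}$ as required.

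For \bidirectionality under the extra hypothesis, the dual argument takes care of the two in-edge junctions. At $v_{n+1,0}$ the image walk $\fai_{n+1}(p_{2,n+1})$ closes with $e_n$, so the $p_{2,n+1}$-in-neighbor of $v_{n+1,0}$ is sent to $v_{n,0}$, matching the source of $e_{n+1}$. At $v_{n+1,1,n+1}$ the image walk $\fai_{n+1}(p_{1,n+1})$ closes with $(v_{n,1,n}, v_{n,1,n+1})$, so its $p_{1,n+1}$-in-neighbor is sent to $v_{n,1,n}$; since $g(x_n) = x_{n+1}$ furnishes $(v_{n+1,1,n}, v_{n+1,1,n+1}) \in E(F_{n+1})$ with $\phi_{n+1}(v_{n+1,1,n}) = v_{n,1,n}$, the \bidirectionality of $\phi_{n+1}$ makes every $F_{n+1}$-in-neighbor of $v_{n+1,1,n+1}$ have $\phi_{n+1}$-image $v_{n,1,n}$ as well. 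The only real obstacle is careful bookkeeping of the ``extension'' vertices $v_{n,1,n+1}$, $v_{n,2,-(n+1)} \in V(F_n)$, which appear as endpoints of the image walks without belonging to the attached $p_{\cdot,n}$'s; the identities $\phi_{n+1}(v_{n+1,1,n+1}) = v_{n,1,n+1}$ and $\phi_{n+1}(v_{n+1,2,-(n+1)}) = v_{n,2,-(n+1)}$ are forced by the shared orbit points $x_{n+1} \in U(v_{n+1,1,n+1}) \cap U(v_{n,1,n+1})$ and $y_{-(n+1)} \in U(v_{n+1,2,-(n+1)}) \cap U(v_{n,2,-(n+1)})$, and from there the verification is a direct computation.
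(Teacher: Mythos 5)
Your proposal is correct and follows essentially the same route as the paper: reduce the verification to the three junction vertices $v_{n+1,0}$, $v_{n+1,1,n+1}$, $v_{n+1,2,-(n+1)}$ (all other vertices having either unique in/out-edges along the attached paths or all edges inside $F_{n+1}$), and check each using the prescribed image walks together with the \pdirectionality{} (resp.\ \bidirectionality) of $\phi_{n+1}$. Your explicit justification that $\phi_{n+1}(v_{n+1,1,i}) = v_{n,1,i}$ and $\phi_{n+1}(v_{n+1,2,-i}) = v_{n,2,-i}$ via the shared orbit points is a detail the paper leaves implicit, but the argument is the same.
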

\begin{proof}
Both the \pdirectionalitys and \bidirectionalitys have to be checked only at the points
 $v_{n+1,0},v_{n+1,1,n+1},v_{n+1,2,-(n+1)}$.
For the other vertices, the conclusion is obvious from the definition.
At the vertex $v_{n+1,0}$, there exists edges $e_{n+1}$, $(v_1(n+1,0),v_1(n+1,1))$
 and $(v_2(n+1,l_{2,n+1}-1),v_2(n+1,l_{2,n+1}))$.
By the definition, all of these edges are mapped to $e_n$.
Therefore, $\fai_{n+1}$ is \bidirectionals at $v_{n+1,0}$.
At the vertex $v_{n+1,1,n+1}$, the \pdirectionalitys is evident from that of $\Fcal$.
Suppose that $\Fcal$ is \bidirectional.
Let $(u_j,v_{n+1,1,n+1})$ $(j = 1,2,\dotsc,k)$ be edges in $F_{n+1}$.
Then, because of \bidirectionality, $\fai_{n+1}(u_j)$'s are identical for all $j$.
The last edge of $p_{1,n+1}$ is mapped by $\fai_{n+1}$ onto $(v_{n,1,n},v_{n,1,n+1})$.
The edge $(v_{n,1,n},v_{n,1,n+1})$
 is projected from the edge\\
 $(v_{n+1,1,n},v_{n+1,1,n+1})$.
Therefore, the \bidirectionalitys of $\fai_{n+1}$ follows at the vertex $v_{n+1,1,n+1}$.
The similar argument can be applied to the vertex $v_{n+1,2,n+1}$.
In this case, the \pdirectionalitys has to be checked carefully.
This concludes the proof.
\end{proof}

By Lemma \ref{lem:well-defined}, we can define the inverse limit $G_{\infty} = (X,f)$.
By the definition of $\Gcal$, $(Y,g)$ is embedded in $(X,f)$.
\subsection{A construction of the required chaotic subset.}
We find a fixed point $p \in X$ as 
 $p = (v_0,v_{1,0},v_{2,0},\dotsc) \in X$.
It follows that $X$ does not have isolated points.
To see this, it is enough to check that every vertex $v \in V(G_n)$ is covered
 by multiple vertices, and this fact is easily seen from the definition of $\Gcal$.
We prove Theorem \ref{thm:main} by successive lemmas.
\begin{lem}
The restriction $f|_{X \setminus Y}$ is a homeomorphism.
\end{lem}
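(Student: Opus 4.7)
The plan is to show four things, from which the homeomorphism property follows immediately: (i) $f(X \setminus Y) \subseteq X \setminus Y$, (ii) $X \setminus Y \subseteq f(X \setminus Y)$, (iii) $f|_{X \setminus Y}$ is injective, and (iv) $(f|_{X \setminus Y})^{-1}$ is continuous.

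Item (ii) is immediate: since $f$ is surjective on $X$ and $Y$ is $g$-invariant, any preimage of a point $x \in X \setminus Y$ must lie in $X \setminus Y$ (otherwise its image would lie in $Y$). For (i), I first observe from the construction that the only edge in $G_n$ from $V(G_n) \setminus V(F_n)$ into $V(F_n)$ is the last edge of $p_{1,n}$, namely $(v_1(n,l_{1,n}-1), v_{n,1,n})$: indeed $p_{2,n}$ ends at $v_{n,0} \notin V(F_n)$, the self-loop $e_n$ stays at $v_{n,0}$, and every other path edge has both endpoints in $V(G_n) \setminus V(F_n)$. Thus if some $y = (v_n) \in X \setminus Y$ satisfied $f(y) \in Y$, then for every sufficiently large $n$ we would be forced to have $v_n = v_1(n, l_{1,n}-1)$, and in particular $v_{n+1} = v_1(n+1, l_{1,n+1}-1)$. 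But by the defining formula $\fai_{n+1}(p_{1,n+1}) = e_n (p_{1,n} w_n p_{2,n})^2 p_{1,n} (v_{n,1,n}, v_{n,1,n+1})$, the second-to-last vertex of $p_{1,n+1}$ maps to the second-to-last vertex of this walk, which is $v_{n,1,n} \in V(F_n)$, not $v_1(n, l_{1,n}-1)$. This contradicts the inverse-limit compatibility $\fai_{n+1}(v_{n+1}) = v_n$.

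For (iii), I would check that every $\fai_{n+1}$ is bidirectional at each vertex of $V(G_{n+1}) \setminus V(F_{n+1})$: an interior path vertex has a unique predecessor, and the two predecessors of $v_{n+1,0}$ (namely $v_{n+1,0}$ itself and $v_2(n+1, l_{2,n+1}-1)$) both map to $v_{n,0}$ under $\fai_{n+1}$, since $\fai_{n+1}(p_{2,n+1})$ ends with $e_n$. Consequently, for any $x = (u_n) \in X \setminus Y$ and any preimage $y = (v_n)$ in $X$, the vertex $v_n$ is forced once $u_{n+1} \notin V(F_{n+1})$: it must equal the common $\fai_{n+1}$-image of all predecessors of $u_{n+1}$ in $G_{n+1}$. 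This determines $v_n$ for all sufficiently large $n$, and propagates backward to every level via $v_n = \fai_{n+1}(v_{n+1})$, so $y$ is unique.

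Item (iv) then follows by a standard compactness argument: any subsequential limit in $X$ of $(f|_{X \setminus Y})^{-1}(x_n)$ is a preimage of $\lim x_n$ under $f$, which by (iii) and (i) must equal $(f|_{X \setminus Y})^{-1}(\lim x_n)$; hence the whole sequence converges to it. The one delicate point is (i)---identifying the unique non-$F$-to-$F$ edge and locating $v_1(n, l_{1,n}-1)$ correctly inside the concatenated walk defining $\fai_{n+1}(p_{1,n+1})$---but this is essentially careful bookkeeping rather than a conceptual obstacle.
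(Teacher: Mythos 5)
Your proposal is correct, and its core step (iii) --- uniqueness of preimages forced by the bidirectionality of the covering maps at the vertices of $V(G_{n+1})\setminus V(F_{n+1})$ --- is exactly the paper's argument, whose proof consists of little more than that observation. Your items (i), (ii) and (iv) (forward invariance of $X\setminus Y$ via the unique edge of $G_n$ entering $V(F_n)$, surjectivity, and continuity of the inverse by compactness) are correct additional bookkeeping that the paper leaves implicit.
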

\begin{proof}
We note that any point $x \in X \setminus Y$ is expressed by the vertices
 $V(G_n) \setminus V(F_n)$ for sufficiently large $n$.
Such vertices are \bidirectionals vertices.
Therefore, every $x \in X \setminus Y$ has the unique $f^{-i}(x)$ for all $i \ge 0$.
\end{proof}
This lemma has proved \ref{main:homeomorphism}.
%
%
%
%
%
%
\begin{lem}\label{lem:enters-around-fixed-point}
For $m > n$, it follows that 
\itemb
\item $\fai_{m,n}(p_{2,m}\ p_{1,m})
 = \quad \dotsb \quad p_{2,n}\ {e_n}^{m-n}\ {e_n}^{m-n}\ p_{1,n} \quad \dotsb\ $, and
\item $\fai_{m,n}(p_{1,m}) = {e_n}^{m-n}\ p_{1,n} \quad \dotsb$ \quad
 {\scriptsize {\rm length more than } $m-n$ } $\dotsb \quad $
 \\
 \hspace{4.3cm} $\dotsb \quad $
 $p_{2,n}\ {e_n}^{m-n-1}\ {e_n}^{m-n-1}\ p_{1,n}\quad \dotsb\ $.
\itemn
\end{lem}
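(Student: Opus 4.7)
The plan is to prove both displayed identities simultaneously by induction on $m-n$ (with $n$ fixed). Writing $A_{m,n} := \fai_{m,n}(p_{1,m})$ and $B_{m,n} := \fai_{m,n}(p_{2,m})$, I strengthen the statement to a local endpoint claim: $A_{m,n}$ \emph{begins} with $e_n^{m-n}\, p_{1,n}$ and $B_{m,n}$ \emph{ends} with $p_{2,n}\, e_n^{m-n}$. The base case $m = n+1$ is immediate from the defining formulas for $\fai_{n+1}$: $\fai_{n+1}(p_{1,n+1})$ starts with $e_n\, p_{1,n}$ and $\fai_{n+1}(p_{2,n+1})$ ends with $p_{2,n}\, e_n$.

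For the inductive step, I use the factorization $\fai_{m+1,n} = \fai_{m,n}\circ \fai_{m+1}$ and apply $\fai_{m,n}$ term-by-term to the defining formulas for $\fai_{m+1}(p_{1,m+1})$ and $\fai_{m+1}(p_{2,m+1})$. Since $\fai_{m,n}(v_{m,0}) = v_{n,0}$ by iterated definition, the self-loop $e_m$ is sent to $e_n$ as a length-$1$ walk. Writing $W_{m,n}$ for the image of the walk $w_m$ (which lies in $F_n$) and $E, E'$ for the images of the terminal single edges at the ends, this yields the recursions
\[ A_{m+1,n} = e_n\,(A_{m,n}\, W_{m,n}\, B_{m,n})^{2}\, A_{m,n}\, E \quad \text{and} \quad B_{m+1,n} = E'\, B_{m,n}\,(A_{m,n}\, W_{m,n}\, B_{m,n})^{2}\, e_n. \]
By the inductive hypothesis $A_{m+1,n}$ then starts with $e_n\cdot e_n^{m-n}\, p_{1,n} = e_n^{(m+1)-n}\, p_{1,n}$ and $B_{m+1,n}$ ends with $p_{2,n}\, e_n^{m-n}\cdot e_n = p_{2,n}\, e_n^{(m+1)-n}$, which closes the induction.

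To obtain the two displayed identities, I read off the middle subwords at the internal junctions. For claim (a), the concatenation $\fai_{m,n}(p_{2,m}\, p_{1,m}) = B_{m,n}\, A_{m,n}$ splices a trailing $p_{2,n}\, e_n^{m-n}$ against a leading $e_n^{m-n}\, p_{1,n}$, immediately producing the required subword $p_{2,n}\, e_n^{m-n}\, e_n^{m-n}\, p_{1,n}$. For claim (b), the same type of junction appears between the two $(A_{m,n}\, W_{m,n}\, B_{m,n})$ blocks inside the recursion for $A_{m+1,n}$; relabelling $m+1 \mapsto m$ gives the stated middle block $p_{2,n}\, e_n^{m-n-1}\, e_n^{m-n-1}\, p_{1,n}$, and the separation "length more than $m-n$" between the leading $e_n^{m-n}\, p_{1,n}$ and this middle block is immediate from the lengths of the intervening walks $W_{m,n}$ and $A_{m,n}$. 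The only real obstacle is bookkeeping: one has to ensure that the accumulating $e_n$-decorations at both sides of each junction match up, which is exactly what the strengthened inductive hypothesis is designed to enforce.
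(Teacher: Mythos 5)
Your proof is correct and takes the only natural route---unwinding the recursive definitions of $\fai_{n+1}(p_{1,n+1})$ and $\fai_{n+1}(p_{2,n+1})$ by induction on $m-n$; the paper itself merely declares the lemma ``obvious by the definition of $\Gcal$,'' so your strengthened inductive hypothesis (prefix $e_n^{m-n}\,p_{1,n}$ for $\fai_{m,n}(p_{1,m})$, suffix $p_{2,n}\,e_n^{m-n}$ for $\fai_{m,n}(p_{2,m})$) is precisely the bookkeeping the author leaves implicit. I see no gaps.
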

\begin{proof}
The proof is obvious by the definition of $\Gcal$.
\end{proof}
Let $n > 0$ sufficiently large.
Then, 
\[
\begin{array}{ll}
 \fai_{n+1}(p_{1,n+1}) = &
           e_n\ (p_{1,n}\ w_n\ p_{2,n})^2 \ p_{1,n}\ (v_{n,1,n},v_{n,1,n+1})\\
         & e_n\ p_{1,n}\ w_n\ p_{2,n}\ \underline{p_{1,n}}\ w_n\ p_{2,n}\ p_{1,n}
 \ (v_{n,1,n},v_{n,1,n+1})
\end{array}
\]
In the left of the underlined occurrence, there exists $e_n\ p_{1,n}$.
In the right of the underlined occurrence, there exists $p_{2,n} p_{1,n}$.
Let $m > n$.
For the projection 
$\fai_{m,n}= \fai_{n+1} \circ \dotsb \circ \fai_{m-1} \circ \fai_m(p_{1,m})$,
 we select the mid $p_{1,k}$ $n \le k < m$ successively.
Therefore, we can find the mid (not necessarily exactly central)
 segment $q_m = p_{1,m}[l,l']$ that is projected
 isomorphically onto the mid occurrence of $p_{1,n}$.
The length of $q_m$ is $l_{1,n}$.
Let $m > n + l_{1,n}$.
Then, by the previous lemma, it follows that
\itemb
\item  (Property 1) : in the sequence $p_{1,m} = \ \dots\ q_m \dots$,
 both right and left translation of $q_m$ are projected by $\fai_{m,n}$ 
 into the segment ${e_n}^{l}$ with $l \ge m-n$, and 
\item  (Property 2) : in the sequence $p_{1,m} = \ \dots\ q_m \dots$,
 both right and left translations of $q_m$ are projected by $\fai_{m,n}$ 
 onto the segment $p_{1,n}$ isomorphically,
\item in the above translations, one can choose the translations such that
 the translation lengths tend to infinity as $m \to \infty$.
\itemn
Let $m(1) = m$, and we have gotten $q_{m(1)}$.
Reset $n = m(1)+1$, and we get $q_{m(2)}$.
Then, by the map $\fai_{m(2),m(1)}$, $q_{m(2)}$ covers $q_{m(1)}$ three times.
In this way, we get the sequence $q_{m(k)}$ for each $k \ge 1$.
%
%
\begin{figure}

\vspace{-4.5cm}

\begin{center}\leavevmode 
\includegraphics[width = 1.0\textwidth, height=20cm]{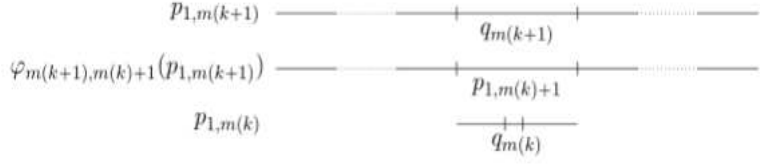}
\end{center}

\vspace{-13cm}

\caption{Constructing the walks $q_{m(k)}$ $(k \ge 1)$.}\label{fig:walks-q_k}
\end{figure}
For each $N \ge 1$, let $C_N := \bigcap_{k \ge N} U(q_{m(k)})$.
Then, because each $U(q_{m(k)})$ is closed, each $C_N$ is closed.
Because, by the map $\fai_{m(k+1),m(k)}$, $q_{m(k+1)}$ covers $q_{m(k)}$ three
 times, each $C_N$ is a Cantor set.
It is evident that $C_1 \subset C_2 \subset \dotsb$.
We define $K = \bigcup_{N > 0}C_N$.
This proves \ref{main:K}.
\begin{lem}\label{lem:not-kuu}
For each $N$, $C_N \cap U(v) \nekuu$ for each $v \in V(q_{m(k)})$
 with $k \ge N$.
\end{lem}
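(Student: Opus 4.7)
My plan is to construct a point $x \in U(v) \cap C_N$ by producing a compatible tower of vertices $v_j \in V(q_{m(j)})$ for every $j \ge N$, satisfying $v_k = v$ and $\fai_{m(j+1), m(j)}(v_{j+1}) = v_j$. Once such a tower is in hand, Notation~\ref{nota:open-sets-of-vertices} yields a unique point $x := \lim_j v_j \in X$ with $x \in U(v_j)$ for every $j \ge N$; consequently $x \in U(v)$ and $x \in \bigcap_{j \ge N} U(q_{m(j)}) = C_N$, which is precisely the assertion of the lemma.

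For the upward indices $j \ge k$ I would proceed by induction: given $v_j \in V(q_{m(j)})$, the property that $q_{m(j+1)}$ covers $q_{m(j)}$ three times under $\fai_{m(j+1), m(j)}$ supplies three preimages of $v_j$ inside $V(q_{m(j+1)})$, any one of which I may take as $v_{j+1}$. For the downward indices $N \le j < k$ I would simply set $v_j := \fai_{m(k), m(j)}(v)$ and verify that $v_j \in V(q_{m(j)})$; this follows by iterating the inclusion $\fai_{m(i+1), m(i)}(V(q_{m(i+1)})) \subset V(q_{m(i)})$ built into the selection of each $q_{m(i+1)}$ as the specific mid subsegment of $p_{1, m(i+1)}$ whose image projects isomorphically onto (the mid occurrence of) $p_{1, m(i)+1}$ and thence, via $\fai_{m(i)+1, m(i)}$, into the chosen $q_{m(i)}$.

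The step I expect to be the main obstacle is justifying this downward inclusion rigorously, since one must read off from the explicit formula for $\fai_{m(i)+1}(p_{1, m(i)+1}) = e_{m(i)}(p_{1, m(i)} w_{m(i)} p_{2, m(i)})^2 p_{1, m(i)} (v_{m(i),1,m(i)}, v_{m(i),1,m(i)+1})$, together with the recursive ``mid'' selection prescription, exactly which vertices of $V(q_{m(i+1)})$ end up inside $V(q_{m(i)})$ (as opposed to straying into the connective $e_{m(i)}$, $w_{m(i)}$, or $p_{2, m(i)}$ portions). Once this descending property is established, the upward and downward constructions glue into the required compatible tower $(v_j)_{j \ge N}$ and the limit point $x$ witnesses $C_N \cap U(v) \nekuu$.
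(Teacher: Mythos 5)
Your upward induction --- extending $v$ to a compatible chain $v_j \in V(q_{m(j)})$ for $j \ge k$ via the three-fold covering and then invoking Notation~\ref{nota:open-sets-of-vertices} to obtain the limit point --- is exactly the paper's proof, and it completely settles the case $k = N$, which is the only case the paper ever uses (namely in the proof of Lemma~\ref{lem:dense}).

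The downward half is where the trouble lies, and not in the way you anticipate. The inclusion $\fai_{m(i+1),m(i)}(V(q_{m(i+1)})) \subset V(q_{m(i)})$ that you flag as the main obstacle is not merely delicate to extract from the formulas: it is false. By construction $\fai_{m(i+1),m(i)+1}(q_{m(i+1)}) = p_{1,m(i)+1}$, and $\fai_{m(i)+1,m(i)}(p_{1,m(i)+1}) = e_{m(i)}\,(p_{1,m(i)}\,w_{m(i)}\,p_{2,m(i)})^2\,p_{1,m(i)}\,(v_{m(i),1,m(i)},v_{m(i),1,m(i)+1})$ passes through every vertex of $G_{m(i)}$ (recall $E(w_{m(i)}) = E(F_{m(i)})$); indeed the equality $V(\fai_{m(i+1),m(i)}(q_{m(i+1)})) = V(G_{m(i)})$ is precisely what the paper exploits to prove that $K$ is dense. ``Covers three times'' means the image properly contains three copies of $q_{m(i)}$ together with everything between them, not that it is contained in $q_{m(i)}$. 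Consequently, for $k > N$ the assertion itself fails for most $v$: since the projections of any $x \in U(v)$ to levels below $m(k)$ are forced to equal $\fai_{m(k),m(j)}(v)$, choosing $v \in V(q_{m(k)})$ with $\fai_{m(k),m(j)}(v) \in V(w_{m(j)}) \setminus V(q_{m(j)})$ for some $N \le j < k$ (such $v$ exist) gives $U(v) \cap U(q_{m(j)}) = \kuu$ and hence $C_N \cap U(v) = \kuu$. No downward argument can close this gap; the lemma must be read (and is only ever applied) with $k = N$, where your downward step is vacuous and your proof reduces to the paper's one-line argument.
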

\begin{proof}
For each $k$, $\fai_{m(k+1),m(k)}$ maps $q_{m(k+1)}$ onto all of $q_{m(k)}$
 three times.
Therefore, by the definition of $C_N$, we get the conclusion.
\end{proof}
%
%
\begin{lem}\label{lem:C_N-is-proximal}
For each $N >0$, $C_N$ is positively and also negatively uniformly proximal.
\end{lem}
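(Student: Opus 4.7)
My plan is to leverage Property 1 from the construction: after a forward or backward shift by the translation amount $t_k^+$ or $t_k^-$, the walk $q_{m(k)}$ lands inside an $e_n^{\,l}$ block, which is collapsed by $\fai_{m(k),n}$ onto the single vertex $v_{n,0}$. Combined with the fact that $q_{m(k)}$ lies strictly in the interior of the path $p_{1,m(k)}$, where each vertex has a unique in- and out-edge in $G_{m(k)}$, this forces forward and backward iteration to act as a plain shift along the path on the clopen sets $U(v)$. The upshot will be that the iterates of $C_N$ are trapped inside $U(v_{n,0})$, whose diameter can be made arbitrarily small by choosing $n$ large.

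For the positive direction, I would fix $\epsilon > 0$ and first choose $n$ large enough that every atom $U(v)$ with $v \in V(G_n)$ has diameter less than $\epsilon$; this is possible because the refining clopen partitions $\Ucal_n$ generate the topology of the compact metrizable $X$, so their meshes tend to zero by a standard compactness argument. For each $k$ with $m(k) > n$, Property 1 supplies a forward translation amount $t_k^+$, and the third bullet of the construction guarantees $t_k^+ \to \infty$ as $k \to \infty$; moreover, the shifted sub-walk $q_{m(k)}^{+t_k^+}$ of $p_{1,m(k)}$ is mapped by $\fai_{m(k),n}$ into an $e_n^{\,l}$ segment. Since interior vertices of $p_{1,m(k)}$ have unique out-edges in $G_{m(k)}$, we get $f^{t_k^+}(U(q_{m(k)})) \subseteq U(q_{m(k)}^{+t_k^+}) \subseteq U(v_{n,0})$. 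Thus $\diam(f^{t_k^+}(C_N)) < \epsilon$ for arbitrarily large $t_k^+$, giving the $\liminf$ condition for positive uniform proximality.

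The negative direction is symmetric, using the backward translation $t_k^-$, which also diverges to infinity. The additional point to verify is that $f^{-t_k^-}$ is single-valued on $U(q_{m(k)})$; this holds because $C_N \subseteq X \setminus Y$ and $f|_{X \setminus Y}$ is a homeomorphism by the lemma just proved, or equivalently because interior vertices of $p_{1,m(k)}$ have a unique in-edge in $G_{m(k)}$. No serious obstacle is expected: once Property 1 is in hand, the argument reduces to bookkeeping the shift action, and the construction has been tuned precisely so that the translation lengths grow without reaching the endpoints of $p_{1,m(k)}$.
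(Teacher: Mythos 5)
Your argument is correct and follows exactly the route the paper intends: the paper's own proof is the single line ``This is obvious by the Property 1,'' and your write-up simply supplies the omitted bookkeeping (mesh of $\Ucal_n$ tending to zero, the shift acting uniquely along the interior of $p_{1,m(k)}$, the translated copy of $q_{m(k)}$ landing in an ${e_n}^l$ block and hence in $U(v_{n,0})$, and the divergence of the translation lengths). Nothing in your elaboration deviates from or adds to the paper's approach; it is the same proof made explicit.
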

\begin{proof}
This is obvious by the Property 1.
\end{proof}
%
%
\begin{lem}\label{lem:C_N-is-recurrent}
For each $N > 0$, $C_N$ is positively and also negatively uniformly recurrent.
\end{lem}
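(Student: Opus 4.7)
My plan is to invoke Properties 2 and 3 of the construction to produce, for any prescribed level $n_0$ and any bound $M$, both a forward shift $s^+_k \ge M$ and a backward shift $s^-_k \ge M$ that leave the level-$n_0$ projection of every point of $C_N$ unchanged. Since $\mesh \Ucal_{n_0} \to 0$ as $n_0 \to \infty$, this delivers uniform recurrence in both time directions.

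Given $\epsilon > 0$, I would first fix $n_0$ with $\mesh \Ucal_{n_0} < \epsilon$ and a bound $M > 0$. Choose $k \ge N$ so large that $m(k) > n_0 + l_{1, n_0}$ and that Properties 2 and 3 simultaneously supply positive and negative translation amounts $s^+_k, s^-_k \ge M$. Writing $q_{m(k)} = p_{1, m(k)}[l, l']$, Property 2 tells me that the shifted sub-walks $p_{1, m(k)}[l + s^+_k, l' + s^+_k]$ and $p_{1, m(k)}[l - s^-_k, l' - s^-_k]$ lie inside $p_{1, m(k)}$ and project isomorphically under $\fai_{m(k), n_0}$ onto the same occurrence of $p_{1, n_0}$ as $q_{m(k)}$ itself does.

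Next, for any $x \in C_N \subset U(q_{m(k)})$, the point $x$ sits at some vertex $p_{1, m(k)}(l + i)$ at level $m(k)$. Interior vertices of $p_{1, m(k)}$ are ``new'' vertices in $V(G_{m(k)}) \setminus V(F_{m(k)})$ with a unique in-edge and a unique out-edge, so iterating $f$ and $f^{-1}$ simply walks along $p_{1, m(k)}$ step by step. Hence $f^{s^+_k}(x) \in U(p_{1, m(k)}(l + i + s^+_k))$ and $f^{-s^-_k}(x) \in U(p_{1, m(k)}(l + i - s^-_k))$, and by the isomorphism in Property 2 both project under $\fai_{m(k), n_0}$ to the same vertex $p_{1, n_0}(i)$ as $p_{1, m(k)}(l + i)$ itself. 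Therefore $x$, $f^{s^+_k}(x)$, and $f^{-s^-_k}(x)$ lie in the same cell of $\Ucal_{n_0}$, so $d(f^{s^+_k}(x), x) < \epsilon$ and $d(f^{-s^-_k}(x), x) < \epsilon$ uniformly for $x \in C_N$; letting $M \to \infty$ produces arbitrarily large return times in both directions.

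The only preliminary that the backward half requires is that $f^{-1}$ be defined on $C_N$. This is immediate: every $x \in C_N$ projects into $V(q_{m(k)}) \subset V(p_{1, m(k)}) \setminus V(F_{m(k)})$ at each level $m(k)$ with $k \ge N$, so $C_N \subset X \setminus Y$, and $f|_{X \setminus Y}$ is a homeomorphism by the previous lemma. This is the main point to check beyond the direct application of Properties 2 and 3; the remainder is routine bookkeeping on the covering.
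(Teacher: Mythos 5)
Your proof is correct and is essentially the paper's argument: the paper simply declares the lemma ``obvious by Property 2,'' and you have filled in the routine details (choosing $n_0$ with small mesh, using the large forward and backward translations of $q_{m(k)}$ that project onto $p_{1,n_0}$ isomorphically, and noting that $C_N \subset X \setminus Y$ so that $f^{-1}$ is defined). No substantive difference in approach.
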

\begin{proof}
This is obvious by the Property 2.
\end{proof}
We get \ref{main:proximal} and \ref{main:recurrent} by the above two lemmas.
The next lemma proves \ref{main:invariant}.
\begin{lem}\label{lem:invariant}
We get $K \subset X \setminus Y$, and $f(K) = K$.
\end{lem}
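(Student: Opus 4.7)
The statement has two parts, the disjointness $K \cap Y = \emptyset$ and the invariance $f(K) = K$; both will be handled by exploiting the fact that each $q_{m(k)}$ is, by construction, a strictly interior subpath of the attached path $p_{1,m(k)}$.

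For the disjointness I will check inductively that writing $q_{m(k)} = p_{1,m(k)}[l_k, l'_k]$ one has $0 < l_k \le l'_k < l_{1,m(k)}$; at each stage of the recursive construction the selected mid occurrence of $p_{1,n}$ in the expansion $e_n(p_{1,n}w_np_{2,n})^2 p_{1,n}(v_{n,1,n},v_{n,1,n+1})$ is flanked on both sides by nontrivial walks. The interior vertices of $p_{1,m(k)}$ were attached as new vertices and lie in $V(G_{m(k)}) \setminus V(F_{m(k)})$, so any $x \in U(q_{m(k)})$ has $x_{m(k)} \notin V(F_{m(k)})$ and consequently $x \notin Y$. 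Since $C_N \subset U(q_{m(k)})$ for all $k \ge N$, this yields $K \subset X \setminus Y$.

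For $f(K) = K$ I aim to establish the two inclusions $f(C_N) \subset C_{N+1}$ and $f^{-1}(C_N) \subset C_{N+1}$; this suffices because the preceding lemma makes $f|_{X \setminus Y}$ a homeomorphism and $K \subset X \setminus Y$. The crucial computation is to locate the images of the two endpoints of $q_{m(k+1)}$ under the projection $\fai_{m(k+1),m(k)}$. Since $q_{m(k+1)}$ projects isomorphically onto the mid occurrence of $p_{1,m(k)+1}$, applying the defining formula
\[
\fai_{m(k)+1}(p_{1,m(k)+1}) = e_{m(k)}\,(p_{1,m(k)}\,w_{m(k)}\,p_{2,m(k)})^2\,p_{1,m(k)}\,(v_{m(k),1,m(k)},v_{m(k),1,m(k)+1})
\]
sends the first vertex of $q_{m(k+1)}$ to $v_{m(k),0}$ (the start of $e_{m(k)}$) and the last vertex to $v_{m(k),1,m(k)+1} \in V(F_{m(k)}) \setminus V(p_{1,m(k)})$; neither target lies in $V(q_{m(k)})$.

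With this in hand, if $x \in C_N$ and $k \ge N$, the compatibility relation $x_{m(k)} = \fai_{m(k+1),m(k)}(x_{m(k+1)}) \in V(q_{m(k)})$ rules out $x_{m(k+1)}$ being either endpoint of $q_{m(k+1)}$. Thus for every $k' \ge N+1$, $x_{m(k')}$ is a non-extreme vertex of $q_{m(k')}$; because interior vertices of $p_{1,m(k')}$ have a unique outgoing and a unique incoming edge in $G_{m(k')}$ (both running along $p_{1,m(k')}$), the coordinates $f(x)_{m(k')}$ and $f^{-1}(x)_{m(k')}$ are the next and previous vertex of $p_{1,m(k')}$, both still lying in $V(q_{m(k')})$. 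Consequently $f(x),\, f^{-1}(x) \in C_{N+1} \subset K$, so $f(K) \subset K$ and $K \subset f(K)$. The only genuine obstacle is bookkeeping --- tracking the projection formula carefully enough to identify the images of the endpoints of $q_{m(k+1)}$; once that is done, the dynamical conclusion follows at once.
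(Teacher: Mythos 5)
Your proposal follows essentially the same route as the paper: disjointness from $Y$ via $V(q_{m(k)}) \subset V(G_{m(k)}) \setminus V(F_{m(k)})$, and invariance via the observation that for $x \in K$ the coordinate $v_{m(k)}$ is eventually not an extreme vertex of $q_{m(k)}$, so shifting forward or backward stays inside the $q$-walks. The paper merely asserts the latter fact, whereas you supply the justification (the endpoints of $q_{m(k+1)}$ project to $v_{m(k),0}$ and $v_{m(k),1,m(k)+1}$, neither of which lies in $V(q_{m(k)})$); this is a correct and welcome filling-in of detail, not a different argument.
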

\begin{proof}
It follows that
\[K = \seb (v_0,v_1,v_2,\dotsc) \in X \mid 
 \exists k_0, \forall k \ge k_0,  v_{m(k)} \in V(q_{m(k)}) \sen.\]
Because $V(q_k) \subset V(G_{m(k)}) \setminus V(F_{m(k)})$,
 the first statement is obvious.
Let $x = (v_0,v_1,v_2,\dotsc) \in K$.
Then, there exists an $k_0 > 0$ such that for all $k > k_0$,
 $v_{m(k)}$ is not the edge vertex of $q_{m(k)}$.
Therefore, $f^{-1}(x),f(x) \in K$.
\end{proof}
\begin{lem}\label{lem:dense}
$K$ is dense in $X$.
\end{lem}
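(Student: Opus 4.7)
The plan is to exploit the basis of clopen sets $\seb U(v) \mid n \bni,\ v \in V(G_n) \sen$ of the topology of $X$: it suffices to verify that $K \cap U(v) \nekuu$ for every such basic set. Fixing $n$ and $v \in V(G_n)$, I will produce, for all sufficiently large $k$, a vertex $u \in V(q_{m(k)})$ with $\fai_{m(k),n}(u) = v$. Once this is in hand, $U(u) \subseteq U(v)$ together with Lemma \ref{lem:not-kuu} (applied with $N = k$) yields a point of $C_k \cap U(u) \subseteq K \cap U(v)$, and density follows.

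The heart of the argument, and the step I expect to require the most attention, is to show that $\fai_{m(k),n}(V(q_{m(k)})) = V(G_n)$ once $k$ is large enough. I would proceed in three stages. First, by construction $q_{m(k)}$ is a subwalk of $p_{1,m(k)}$ mapped isomorphically by $\fai_{m(k),m(k-1)+1}$ onto $p_{1,m(k-1)+1}$, so $\fai_{m(k),m(k-1)+1}(V(q_{m(k)})) = V(p_{1,m(k-1)+1})$. Second, the explicit recursion
\[ \fai_{j+1}(p_{1,j+1}) = e_j\,(p_{1,j}\,w_j\,p_{2,j})^2\,p_{1,j}\,(v_{j,1,j},v_{j,1,j+1}), \]
applied at $j = m(k-1)$, combined with $V(w_j) = V(F_j)$ (which holds because $w_j$ traverses every edge of $F_j$ and $F_j$ is a surjective relation), shows that $\fai_{m(k-1)+1,m(k-1)}(V(p_{1,m(k-1)+1}))$ contains $V(p_{1,m(k-1)}) \cup V(p_{2,m(k-1)}) \cup V(F_{m(k-1)}) = V(G_{m(k-1)})$. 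Third, every cover $\fai_j$ is an edge-surjective graph homomorphism between surjective graphs and is therefore vertex-surjective; hence $\fai_{m(k-1),n}(V(G_{m(k-1)})) = V(G_n)$ whenever $m(k-1) \ge n$. Composing the three identities gives $\fai_{m(k),n}(V(q_{m(k)})) = V(G_n)$, as required.

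Since $m(k-1) \to \infty$ as $k \to \infty$, the condition $m(k-1) \ge n$ is met for all large $k$, supplying the vertex $u$. Beyond this central step, no serious difficulty is expected: only the index bookkeeping among $m(k-1)$, $m(k-1)+1$, and $m(k)$ and among the various walks $p_{1,\cdot}$, $p_{2,\cdot}$, $w_{\cdot}$, $q_{m(\cdot)}$ needs to be kept straight. All structural inputs—the isomorphic embedding of $q_{m(k)}$ onto $p_{1,m(k-1)+1}$, the recursive formula for $\fai_{j+1}$ on $p_{1,j+1}$, and the vertex-surjectivity of covers—are already in place in the preceding material.
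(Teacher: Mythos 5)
Your proposal is correct and follows essentially the same route as the paper: both rest on the fact that $q_{m(k)}$ projects isomorphically onto $p_{1,m(k-1)+1}$, whose image one level further down already covers all of $V(G_{m(k-1)})$, combined with Lemma \ref{lem:not-kuu}. The only difference is presentational: you push the image explicitly down to an arbitrary level $n$ via vertex-surjectivity of the covers, whereas the paper stops at the cofinal levels $m(k)$ and leaves that last step implicit.
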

\begin{proof}
Take $k > 0$.
Then, $\fai_{m(k+1),m(k)+1}(q_{m(k+1)}) = p_{1,m(k)+1}$.
Therefore, it is evident that $V(\fai_{m(k+1),m(k)}(q_{k+1})) = V(G_{m(k)})$.
By Lemma \ref{lem:not-kuu}, we get the conclusion.
\end{proof}
This proves \ref{main:dense}.
As notified in Remark 2.12 of \cite{AGHSY}, we also get the following; and this
 proves \ref{main:transitive}:
\begin{lem}
Each $x \in K$ is a both positively and negatively transitive point.
\end{lem}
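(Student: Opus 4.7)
The plan is to prove positive and negative transitivity in parallel, by showing directly from the graph-covering structure that, for any $x \in K$, any level $n$, and any vertex $u \in V(G_n)$, there exist $i \ge 0$ and $i' \ge 0$ with $f^i(x) \in U(u)$ and $f^{-i'}(x) \in U(u)$. The backward orbit is well-defined on $K$ because Lemma \ref{lem:invariant} places $K$ inside $X \setminus Y$, on which $f$ is a homeomorphism. Fixing $x \in C_N$ and, given $n$, choosing $k \ge N$ with $m(k) > n$, the coordinate $v_{m(k)} := \fai_{\infty, m(k)}(x)$ lies in $V(q_{m(k)})$.

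First I would observe that every interior vertex of the path $p_{1, m(k)}$ is ``new'' in the sense that it is not a vertex of $F_{m(k)}$ or of $p_{2, m(k)}$; it therefore occurs only in $E(p_{1, m(k)})$ and has a unique outgoing and a unique incoming edge in $G_{m(k)}$. Since $V(q_{m(k)})$ lies strictly interior to $p_{1, m(k)}$ (the nested ``mid $p_{1, k}$'' selections keep $q_{m(k)}$ away from either endpoint of $p_{1, m(k)}$), the forward orbit of $x$, projected to $V_{m(k)}$, must trace $p_{1, m(k)}$ starting from $v_{m(k)}$, and in particular visits every vertex of the suffix $p_{1, m(k)}[l', l_{1, m(k)}]$ after $q_{m(k)}$; symmetrically, the backward orbit visits every vertex of the prefix $p_{1, m(k)}[0, l]$ before $q_{m(k)}$.

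The remaining combinatorial claim, which I would prove by induction on $m - n$, is that both $\fai_{m, n}(p_{1, m}[l', l_{1, m}])$ and $\fai_{m, n}(p_{1, m}[0, l])$ visit every vertex of $V(G_n)$ whenever $m > n$. The base case $m = n+1$ is immediate from the formula $\fai_{n+1}(p_{1, n+1}) = e_n (p_{1, n} w_n p_{2, n})^2 p_{1, n} (v_{n, 1, n}, v_{n, 1, n+1})$: the image of $q_{n+1}$ sits inside the second $p_{1, n}$, so the post-$q_{n+1}$ image contains the full third $p_{1, n}$ together with $w_n$ and $p_{2, n}$, and since $E(w_n) = E(F_n)$ this covers $V(F_n) \cup V(p_{1, n}) \cup V(p_{2, n}) = V(G_n)$; the prefix case is symmetric. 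For the inductive step, after one application of $\fai_m$ the post-$q_m$ portion of $p_{1, m}$ contains an uninterrupted copy of $p_{1, m-1}$ as a sub-walk (the third occurrence of $p_{1,m-1}$ in $\fai_m(p_{1, m})$), so a parallel auxiliary induction on $m - n$ for the assertion ``$\fai_{m-1, n}(p_{1, m-1})$ visits $V(G_n)$'' closes the argument, with the prefix case handled analogously using the first occurrence. The main delicacy will be bookkeeping the nested ``mid $p_{1, k}$'' selections defining $q_m$ to guarantee that genuine full copies of $p_{1, m-1}$ survive on either side of $q_m$ inside $p_{1, m}$ at every level, so that the inductive step consistently goes through.
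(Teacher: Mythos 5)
Your argument is correct and is essentially the paper's intended proof: the paper simply declares the statement ``evident from Property 2,'' and what you have written is the natural unpacking of that claim --- the orbit of a point of $K$ deterministically traces the remainder (resp.\ the initial part) of the path $p_{1,m(k)}$ on either side of $q_{m(k)}$, and the $\fai_{m,n}$-image of a full copy of $p_{1,m-1}$ sweeps out all of $V(G_n)$, exactly as in the proof of Lemma \ref{lem:dense}. No substantive difference in approach; your version just supplies the bookkeeping the paper omits.
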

\begin{proof}
The proof is evident from Property 2.
\end{proof}
Finally, we note that, crashing $Y$ into a point, we get a very simple chaotic system.
It is easy to check that this last system has zero topological entropy.
Therefore, by a standard calculation, we get $h(f) = h(g)$.

In the above construction of $G_n$, we can take $w_n$ very very long compared with
 the lengths of $p_{1,n}, p_{2,n}$.
Then, except the ergodic measure on the fixed point $p$, there may not arise
 any additional ergodic measures on $(X,f)$ other than that of $(Y,g)$ (cf. \cite[Theorem 4.22]{Shimomura5}).

\end{document}